\def\equationautorefname~#1\null{%
  (#1)\null
}
\newlength{\figwidth}
\newcommand{\nc}{\newcommand}
\nc{\eg}{{\it e.g., }} 
\newcommand{\ie}{{\em i.e., }}
\nc\E{\ensuremath{\mathbb{E}}}
\nc\V{\ensuremath{\mathbb{V}}}
\nc{\bb}{{\text{\boldmath$b$}}}
\nc{\bs}{{\text{\boldmath$s$}}}
\nc{\eps}{\varepsilon}
\nc{\g}{{$G(N,L)$ }}
\nc{\pp}{($P$)}
\nc{\dd}{($D$)}
\nc{\pps}{($P$)$\,\,$}
\nc{\dds}{($D$)$\,\,$}
\nc{\ep}{{\varepsilon}}
\nc{\bu}{{\text{\boldmath$u$}}}
\nc{\bw}{{\text{\boldmath$w$}}}
\nc{\bq}{{\text{\boldmath$q$}}}
\nc{\be}{{\text{\boldmath$e$}}}
\nc{\bd}{{\text{\boldmath$d$}}}
\nc{\cd}{$c_{ij}^d$}
\nc{\bh}{{\text{\boldmath$h$}}}
\nc{\bv}{{\text{\boldmath$v$}}}
\nc{\bx}{{\text{\boldmath$x$}}}
\nc{\by}{{\text{\boldmath$y$}}}
\nc{\bc}{{\text{\boldmath$c$}}}
\nc{\bo}{{\text{\boldmath$o$}}}
\nc{\bz}{{\text{\boldmath$z$}}}
\nc{\bzero}{{\text{\boldmath$0$}}}
\nc{\R}{\mathbb R}
\nc{\N}{\mathbb N}
\nc{\Z}{\mathbb Z}
\nc{\Q}{\mathbb Q}
\nc{\C}{\mathbb C}
\nc{\mbP}{\mathbb P}
\nc{\I}{\mathbb I}
\nc{\disp}{\displaystyle}
\nc{\br}{{\text{\boldmath$r$}}}
\nc{\bpi}{{\text{\boldmath$\pi$}}}
\nc{\ba}{{\text{\boldmath$a$}}}
\nc{\bp}{{\text{\boldmath$p$}}}
\nc{\ff}{{\mathbf f}}
\nc{\var}{V\!ar}
\nc{\dsum}{\displaystyle \sum}
\nc{\dlim}{\displaystyle \lim}
\nc{\dmin}{\displaystyle \min}
\nc{\dsup}{\displaystyle \sup}
\nc{\dinf}{\displaystyle \inf}
\nc{\cf}{$C(\ff)$}
\nc{\hc}{$\nabla^2C(\ff)$}
\nc{\gc}{$\nabla C(\ff)$}
\nc{\ul}{\underline}
\nc{\zb}{$\bar{\bz}$}
\nc{\pc}{\frac{\partial C(\ff)}{\partial f_e}}
\nc{\xu}{ \bx_{\mu}}
\nc{\dx}{$\Delta \bx$}
\nc{\du}{$\Delta \bx_{\mu}$}
\nc{\p}[1]{\hbox{p} \! \left( #1 \right)}
\nc{\pr}[1]{\hbox{P} \! \left( #1 \right)}
\nc{\prc}[2]{\hbox{P} \! \left( \left. #1 \; \right| #2 \right)}
\nc{\ec}{\end{mathcal}}
\newtheorem{lemma}{Lemma}[section]
\newtheorem{theorem}{Theorem}[section]
\nc\urlb{\begingroup \urlstyle{tt}\Url}
\newcounter{myquote_counter}
\newsavebox{\myquotebox}
\newsavebox{\myquoterefbox}
\newlength{\quotewidth}
\newlength{\quoterefindent}
\newcommand*{\scaledquotefactor}{0.95}
{\renewcommand*{\scaledquotefactor}{1.0}%
   \begin{lrbox}{\myquoterefbox}\begin{minipage}[b]{0.8\quotewidth}%
       \em #1%
   \end{minipage}%
   \end{lrbox}%
   \begin{lrbox}{\myquotebox}\begin{minipage}[b]{1.0\quotewidth} 
}%
{\end{minipage}\end{lrbox}%
\begin{center}%
 \colorbox{quotebackcolor}{\begin{minipage}{\quotewidth}
                        \scalebox{\scaledquotefactor}{\usebox{\myquotebox}}\\[1pt]
          \hspace*{\quoterefindent}\scalebox{\scaledquotefactor}{\usebox{\myquoterefbox}}
       \end{minipage}
      }      
\end{center}%
\par}
\newcommand{\Li}{\mbox{Li}}
\begin{document}

\title{A new non-negative distribution with both finite and infinite support} 
\author{Matthew Roughan\thanks{ARC Centre of Excellence for
    Mathematical \& Statistical Frontiers in the School of
    Mathematical Sciences at the University of Adelaide,
    Australia. \url{matthew.roughan@adelaide.edu.au}} }

\maketitle
 
{\bf Abstract:}  The Tukey-$\lambda$ distribution has interesting properties including (i) for some parameters values it has finite support, and for others infinite support, and (ii) it can mimic several other distributions such that parameter estimation for the Tukey distribution is a method for identifying an appropriate class of distribution to model a set of data. The Tukey-$\lambda$ is, however, symmetric. Here we define a new class of {\em non-negative} distribution with similar properties to the Tukey-$\lambda$ distribution. As with the Tukey-$\lambda$ distribution, our distribution is defined in terms of its quantile function, which in this case is given by the polylogarithm function. We show the support of the distribution to be the Riemann zeta function (when finite), and we provide a closed form for the expectation, provide simple means to calculate the CDF and PDF, and show that it has relationships to the uniform, exponential, inverse beta and extreme-value distributions.



\noindent {\bf  MSC Classification:} 60E05

\noindent {\bf Keywords:} Tukey-$\lambda$ distribution, quantile function, polylogarithm

\section{Introduction} 

The Tukey-$\lambda$ distribution
\cite{doi:10.1080/01621459.1971.10482275,10.1145/360827.360840,Karvanen_2008}
is a distribution defined in terms of its quantile function. It has
interesting properties including that (i) for some parameters values
it has finite support, and for others infinite support; and (ii) it
can mimic several other distributions. The latter property makes the
distribution useful for identifying an appropriate class of
distribution to model a set of data. Such properties are rare, as are
definitions of distributions in terms of quantiles. 

In this short note we present a new distribution defined taking its
quantile function to be the polylogarithm function defined by the sum
\begin{equation}
  \label{eq:def}
  Q(p; s) = \Li_s(p) = \sum_{k=1}^{\infty} \frac{p^k}{k^s}.
\end{equation}
Although the polylogarithm is defined (either in terms of this series
or by analytic continuation) for all $p, s \in \C$, we need only the
values $s \in \R$ and $p \in [0,1]$, noting that the defining sum
converges everywhere except for $p = 1$ when $s \leq 1$.

This distributions has similar properties to the Tukey distribution,
save that it defines non-negative random variables where the Tukey
distribution is symmetric. Like the Tukey distribution it interpolates
between several classes of traditional distributions. For 
\begin{itemize}

\item $s$ large ($\geq 10.0$), it closely approximates the uniform distribution,

\item $s \simeq 1.6$, it approximates the (non-negative) triangular
  distribution,

\item $s = 1.0$, it is exactly the exponential distribution,
  
\item $s = 0.0$, it is exactly an inverse beta distribution, and

\item for large negative $s$ it approximates a generalized extreme
  value distribution with infinite mean. 
  
\end{itemize}
This is suggestive that, like the Tukey distribution, this
distribution could be used with a probability plot correlation
coefficient (PPCC) to determine the best distributional family for a
given set of non-negative data. 

This highlights that although it is unusual to specify a class of
random variates in terms of their quantile function -- the inverse
cumulative density function (iCDF) -- there are advantages to doing so.

\section{Background}

\subsection{The Tukey-$\lambda$ distribution}

The Tukey-$\lambda$ distribution
\cite{doi:10.1080/01621459.1971.10482275,10.1145/360827.360840,Karvanen_2008}
(the
  original publication of the distribution was in a technical report
  that no longer appears to be available)  is a distribution defined
by the quantile function 
\[ Q(p; \lambda) = \left\{ \begin{array}{ll}
          \frac{1}{\lambda} \left[ p^\lambda - (1-p)^\lambda \right],
              \mbox{ if } \lambda \neq 0, \\
          \log\left( \frac{p}{1-p} \right), 
              \mbox{ if } \lambda = 0, \\
        \end{array}
              \right.
\]
where $\lambda$ is the shape parameter. The distribution is symmetric
(about 0). 

It is unusual because
\begin{itemize}
\item $\lambda > 0$ the distribution has finite support; and when

\item $\lambda \leq 0$ the distribution has infinite support. 
  
\end{itemize}

The Cumulative Distribution Function (CDF) and Probability Density
Function (PDF) are not given, in general, by closed forms but its
moments are known.

Perhaps most interestingly it interpolates between a range of
traditional distributions: 
\begin{itemize}
\item $\lambda = -1$: it is approximately Cauchy; 
  
\item $\lambda = 0$: it is the logistic distribution; 
  
\item $\lambda = 0.14$: if it approximately normal $N(0, 2.142)$; and
  
\item $\lambda = 1$ and $2$: it is uniform, $U(-1,1)$ and
  $U(-1/2,1/2)$, respectively. 
  
\end{itemize}

A variety of generalization of the distribution exist
\cite{10.1145/360827.360840,Karvanen_2008} and it has been used in a
number of applications (see for example
\cite{doi:10.1080/01621459.1971.10482275,https://doi.org/10.1029/WR015i005p01049}). Its
main purpose here is to serve as basic for comparison to the new distribution
presented in this paper.

\subsection{The polylogarithm function}

The polylogarithm function is a standard special function, albeit not
as commonly used as the gamma or zeta functions. However, it has been
much studied, \eg
\cite{jacobs72:_numer_calcul_polyl,wood92:_comput_poly,crandall06:_note,vepstas08:_hurwit,crandall12:_unified,BAILEY2015115,bailey15:_crandall},
since the 17th century, and there is at least one
book~\cite{lewin81:_polyl} written about its properties. It has many
applications: in this context the most obvious are in the moment
generating function of the zeta distribution and in the mean of the
exponential-logarithmic distribution. There is also a polylogarithmic
distribution~\cite{doi:10.1080/03610919208813032} (not the
distribution specified here).

The polylogarithm is a generalization of the log function in the sense
that the Taylor series of the ordinary logarithm
\[ - \ln(1-z) = \sum_{n=1}^{\infty} \frac{z^n}{n}, 
\]
which matches the definition \autoref{eq:def} for $s=1$, but also note
that this is the quantile function of the exponential distribution.

The polylogarithm function is defined for $s \in \C$; however, we only
consider real parameters here in order to generate real random
variables. 
 
The polylogarithm function is well-behaved in the region of
interest. It is monotonically increasing from 0 at $z=0$ to
\begin{equation}
  \label{eqn:bound}
   \lim_{z \rightarrow 1} \Li_s(z) = \left\{
            \begin{array}{ll}
               \zeta(s), & \mbox{ for } s > 1, \\
               \infty,   & \mbox{ otherwise}, \\
            \end{array}
           \right.
\end{equation}
where $\zeta(s)$ is the well-known Riemann zeta function. Thus the
function forms a valid quantile or inverse CDF function. 

There are a number of computer packages for numerical calculation of
polylogarithms
\eg~\cite{polylog_python,polyl_wolfr_languag_system,vepstas19:_anant}. We
use the package described in \cite{roughan2020polylogarithm}, written
in Julia~\cite{bezanson17:_julia}.

\section{Properties of the Random Variate}

Note that throughout this letter we focus on the single shape
parameter $s$ and neglect separate scale and location parameters, but
these can easily be incorporated by addition and multiplication given
the underlying properties given here. 

\subsection{Support}

From \autoref{eqn:bound} we can immediately identify that the support
of the distribution is
\[
  \mbox{supp}(X) = \left\{
            \begin{array}{ll}
               [0,\zeta(s)], & \mbox{ for } s > 1, \\
               \left[ 0, \infty \right),  & \mbox{ otherwise}. \\
             \end{array}
           \right.
\] 

\subsection{Cumulative distribution and density functions}

We know of no closed form for the inverse of the polylogarithm and so
the CDF must be calculated numerically. However, $Q=F^{-1}$ is
monotonically increasing so this is an easy computation, involving a
one-dimensional search to find the point $p$ such that $F^{-1}(p) = x$
(or equivalently $F(x) = p$).

The density function $f(x)$ can be computed by noting 
\begin{eqnarray*}
   \left. \frac{dF^{-1}}{dp} \right|_{F^{-1}(p) = x} & = & 1 / f(x).
\end{eqnarray*}
As before a search is needed to find the point where $F^{-1}(p) = x$;
however the derivative of the polylogarithm function can be found
directly from the definition
\[ \frac{d}{dx} \Li_s(x) = \frac{ \Li_{s-1}(x) }{x}.
\]
Staudte~\cite{Staudte_2017} argues that this function
  (scaled by the mean where this is finite) should have its own place
  in the important descriptions of a probability distribution and
  calls it the probability density quantile (pdQ) function. He
  proposes and later authors confirm~\cite{dedduwakumara19} that many
  applications can use this function, for instance for parameter
  estimation, more effectively than traditional
  characterizations. From it we obtain the density
\[ f(x) = \left.  \frac{p}{ \Li_{s-1}(p)}\right|_{ \Li_s(p) = x}. 
\]
\autoref{fig:cdf} shows the CDF and density for a range of parameters
$s$. We see several features; notably all of the densities are 1 at the
origin, which fact can be derived from the limit
\[ \frac{ \Li_{s-1}(x) }{x} \rightarrow 1 \mbox{ as } x \rightarrow 0. \]

\begin{figure}
  \begin{center}
    \includegraphics[width=0.48\columnwidth]{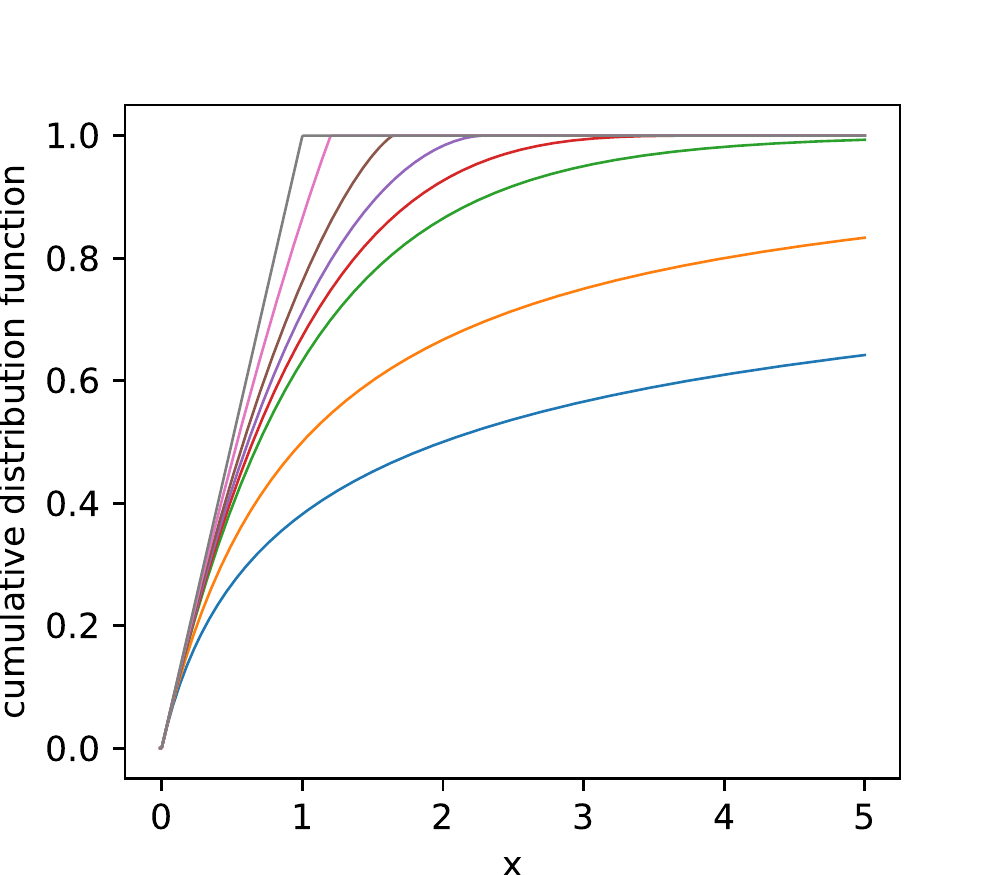}
    \includegraphics[width=0.48\columnwidth]{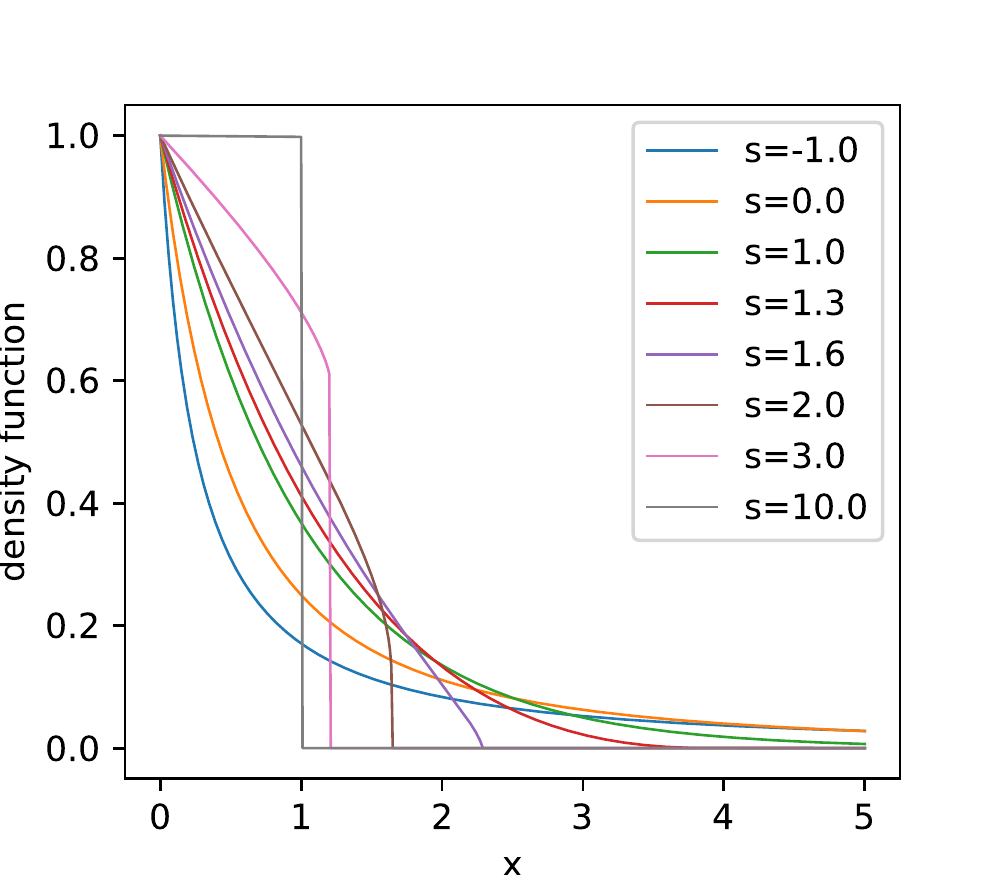}
  \end{center}
  \caption{CDF and PDF}
  \label{fig:cdf}
\end{figure}

\subsection{Simulation}

One advantage of defining a random variable in terms of its quantile
function is that simulation is easy. Given a set of random variables
$X_i \sim U(0,1)$, \ie that are distributed as uniform random
variables on the interval $[0,1]$, then a random variable with quantile
function $Q(p) = F^{-1}(p)$ can be generated by taking
\[ Y_s = Q(X). \]

\subsection{Expectation}

The following result appears almost trivial, but we have not seen it
stated explicitly anywhere.

\begin{lemma}
  The expectation of a random variable $Y$ with quantile function
  $Q(p)$ is given by
  \[ E[ Y ] = \int_0^1 Q(p) \, dp, \]
  when this integral exists.
\end{lemma}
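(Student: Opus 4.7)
The plan is to prove this via the probability integral transform. The key observation is that if $U \sim U(0,1)$, then the random variable $Q(U)$ has the same distribution as $Y$: indeed, $\pr{Q(U) \leq y} = \pr{U \leq F(y)} = F(y)$, where I use $Q(p) = \inf\{y : F(y) \geq p\}$ as the generalized inverse so the identity $\{Q(U) \leq y\} = \{U \leq F(y)\}$ holds even when $F$ has flats or jumps. By the law of the unconscious statistician applied to $U$ with its Lebesgue density $\mathbf{1}_{[0,1]}(p)$,
\[ E[Y] = E[Q(U)] = \int_0^1 Q(p) \, dp, \]
which is the claim. Existence on one side is equivalent to existence on the other, justifying the stated hypothesis.

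The key steps, in order, are: (i) define $Q$ as the left-continuous generalized inverse of $F$, so it is defined on all of $(0,1)$ and is Borel-measurable; (ii) verify the distributional identity $Q(U) \stackrel{d}{=} Y$ via the elementary computation above; (iii) apply change of variables to the expectation of $Q(U)$, which is a direct Lebesgue integral against the uniform density on $[0,1]$. An alternative, equivalent route goes through the layer-cake formula $E[Y] = \int_0^\infty (1-F(y)) \, dy$ (valid here because $Y \geq 0$ from \autoref{eqn:bound}) followed by the substitution $p = F(y)$ and Fubini--Tonelli on the set $\{(p,y) : 0 \leq y < Q(p),\ 0 \leq p \leq 1\}$ to rewrite the double integral as $\int_0^1 Q(p) \, dp$.

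The main obstacle, and really the only one, is handling generality: if $F$ fails to be strictly increasing or continuous, then $Q = F^{-1}$ must be read as a generalized inverse, and one has to check that the identity $\{Q(U) \leq y\} = \{U \leq F(y)\}$ still holds almost surely. For the distribution introduced in this paper this subtlety does not arise, since the polylogarithm $Q(p) = \Li_s(p)$ is continuous and strictly increasing on $[0,1)$ (with a possible singularity only at $p=1$), so $F$ is a genuine homeomorphism of $(0,\mathrm{supp}(X))$ onto $(0,1)$, and the substitution $p = F(y)$, $dp = f(y)\,dy$ is a classical $C^1$ change of variables. Integrability of $Q(p)$ on $[0,1]$ and integrability of $y f(y)$ on the support are then literally the same statement under this bijection, which is exactly the qualifier ``when this integral exists.''
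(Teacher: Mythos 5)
Your proof is correct and follows essentially the same route as the paper's: represent $Y$ as $Q(U)$ for $U \sim U(0,1)$ and apply the law of the unconscious statistician (the paper cites this as the expectation-of-a-function formula from Karr) to get $E[Y]=\int_0^1 Q(p)\,dp$. Your additional care with the generalized inverse and the alternative layer-cake derivation go beyond what the paper writes down, but the core argument is identical.
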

\begin{proof}
  Take random variable $X \sim U(0,1)$, and note that the expectation
  of a $L_1$ function $g(x)$ of a random variable is given
  by \cite[Thm~4.26]{karr92:_probab}
  \[ E[ g(X) ] = \int g(x) dF_X. \]
  Then note that we can create  a random variable $Y$ with quantile function
  $Q(p)$ by taking $Y = Q(X)$ and hence when the integral exists
  \[ E[ Y ] = \int Q(x) dF_X = \int_0^1 Q(p) dp. \]
\end{proof}
The lemma leads directly to the expectation for our random variable $Y
\sim \Li_s(X)$ as being
\[ E[ Y_s ] = \int_0^1 \Li_s(x) dx. \]
Substituting the definition \autoref{eq:def} and taking care that we
integrate over a finite series we get
\begin{eqnarray*}
  E[ Y_s ]
    & = & \lim_{T \rightarrow 1} \sum_{k=1}^{\infty} \frac{1}{k^s} \int_0^T x^k dx \\ 
    & = & \lim_{T \rightarrow 1} \sum_{k=1}^{\infty} \frac{T^{k+1}}{k^s  (k+1) } \\
    & = &  \left\{
            \begin{array}{ll}
               \sum_{k=1}^{\infty}  \frac{1}{k^s  (k+1) }, & \mbox{ for } s > 0, \\
               \infty,   & \mbox{ otherwise}, \\
            \end{array}
           \right.
\end{eqnarray*}
where convergence is derived from $\sum_{k=1}^{\infty} \frac{1}{(k+1)^{s+1}
} \leq \sum_{k=1}^{\infty} \frac{1}{k^s (k+1) } \leq
\sum_{k=1}^{\infty} \frac{1}{k^{s+1} }$.

\begin{figure}
  \begin{center}
    \includegraphics[width=0.7\columnwidth]{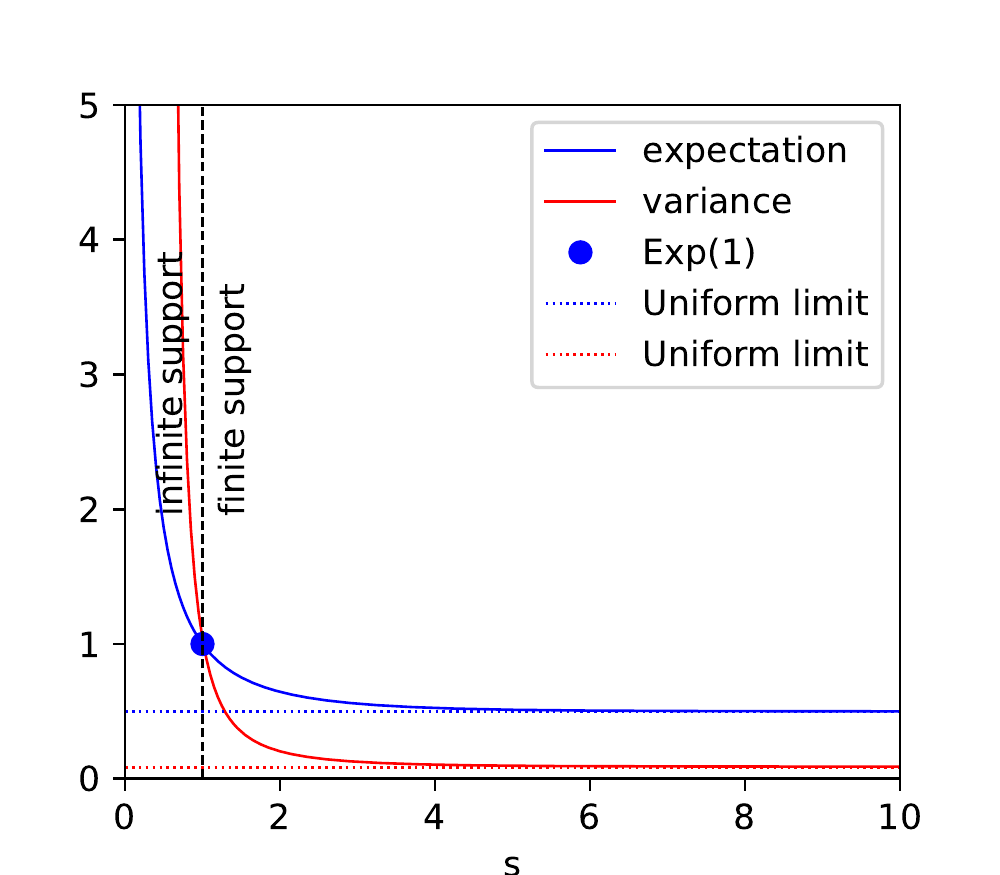}
  \end{center}
  \caption{The solid lines show the expectation and variance. The
    dotted lines show the corresponding limits given by the uniform
    distribution. The dot shows the expectation (and variance) of the
    exponential distribution (with parameter $\lambda=1$). The vertical
    line shows the boundary between regions of finite and infinite
    support. Note that the distribution is also defined for $s < 0$
    but the mean and variance are both infinite in this region.}
  \label{fig:ex}
\end{figure}

\autoref{fig:ex} shows the expectation as a function of $s$.
Interestingly, there is a range of values $0 < s \leq 1$ where the
distribution has infinite support but finite expectation, and a range
where the expectation is infinite $s \leq 0$ where the distribution
resembles the Pareto or power-law distribution. However, the Pareto
distribution does not include parameters with finite support as this
distribution does, nor does it interpolate from the exponential
distribution (at $s=1$).

There are many identities known for polylogarithms. One of use here is
that 
\[ \Li_{s+1}(z) = \int_0^z \frac{\Li_s(t)}{t} dt, \]
which leads via integration by parts to
\[ E[ Y_s ] = \zeta(s+1) - E[ Y_{s+1} ], \]
for random variable $Y_s$ which has quantile function $\Li_s(z)$. This
has advantages for calculating the expectation particularly for $s
\downarrow 0$ where the series converges only very slowly and direct
integration is more problematic because of the pole.

\subsection{Higher moments}

We do not have a closed form for the higher moments, but we can estimate
them through integration, using the above result to note that
\[ E[ Y^m ] = \int_0^1 \Li_s(x)^m dx.
\]
\autoref{fig:ex} also shows the resulting variance computed by
numerical integration. It is noteworthy that the asymptote for
variance appears to be $s_2=1/2$, which we show in the following
result.

\begin{theorem}
  The random variable $Y_s$ with quantile function
  $Q(p;s) = \Li_s(p)$ has infinite $m$th moment for $s < s_m = 1 -
  1/m$. 
\end{theorem}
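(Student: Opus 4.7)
The plan is to extend the lemma from the first moment to the $m$th moment: applying it to $g(Y_s) = Y_s^m$ gives
$$E[Y_s^m] \;=\; \int_0^1 \Li_s(p)^m \, dp.$$
Since $\Li_s$ is continuous on $[0,1)$, it is bounded on $[0,1-\delta]$ for every $\delta > 0$, so any divergence of this integral must come from the behavior of the integrand near $p=1$. The theorem thus reduces to a quantitative statement about how fast $\Li_s(p)$ blows up as $p \uparrow 1$ when $s \leq 1$.

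The key step is to show that for every $s < 1$ there exist constants $C = C(s) > 0$ and $\eta > 0$ with
$$\Li_s(p) \;\geq\; C\,(1-p)^{s-1} \qquad \text{for all } p \in [1-\eta,\,1).$$
This can be quoted from the classical asymptotic $\Li_s(p) \sim \Gamma(1-s)(1-p)^{s-1}$ near the branch point (valid for non-integer $s<1$), but it also admits an elementary proof: truncate the defining series at $K = \lfloor 1/(1-p)\rfloor$; for $k \leq K$ we have $p^k \geq p^K$, and $p^K$ is bounded below by a positive constant (approximately $e^{-1}$) uniformly in $p$; combining this with the standard bound $\sum_{k=1}^K k^{-s} \geq c\,K^{1-s}$, valid for every $s < 1$, and using that $K$ grows like $1/(1-p)$, yields the claimed power-law lower bound.

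Granted the lower bound, we obtain $\Li_s(p)^m \geq C^m (1-p)^{m(s-1)}$ near $p=1$, so
$$\int_0^1 \Li_s(p)^m \, dp \;\geq\; C^m \int_{1-\eta}^1 (1-p)^{m(s-1)}\,dp,$$
and the remaining integral diverges exactly when $m(s-1) \leq -1$, i.e.\ when $s \leq 1 - 1/m = s_m$. In particular $s < s_m$ forces $E[Y_s^m] = \infty$, which proves the theorem. The main obstacle in this plan is the elementary derivation of the $(1-p)^{s-1}$ blow-up of $\Li_s$; once that is in hand, the conclusion follows from a direct comparison of power-type integrals.
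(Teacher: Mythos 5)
Your proposal is correct, and it takes a somewhat different route from the paper. The paper applies Chebyshev's (Markov's) inequality with $\varphi(x)=x^m$ to get $Q_s(1-p)^m\,p \leq E[Y_s^m]$, then quotes the branch-point asymptotic $\Li_s(z)\sim\Gamma(1-s)(-\ln z)^{s-1}$ to conclude that the left-hand side blows up as $p\to 0$ when $m(s-1)+1<0$. You instead work directly with the integral representation $E[Y_s^m]=\int_0^1\Li_s(p)^m\,dp$ and lower-bound the integrand near $p=1$; note that the paper's Chebyshev bound is essentially the one-point version of your integral bound, since by monotonicity $\int_0^1 Q_s(u)^m\,du\geq \int_{1-p}^1 Q_s(u)^m\,du\geq Q_s(1-p)^m\,p$, so the two arguments share the same core. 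Where yours genuinely adds something is the elementary derivation of the lower bound $\Li_s(p)\geq C(1-p)^{s-1}$ by truncating the (positive-term) series at $K=\lfloor 1/(1-p)\rfloor$ and using $p^K\geq c_0>0$ together with $\sum_{k=1}^K k^{-s}\geq cK^{1-s}$ for $s<1$: this is self-contained, avoids quoting the asymptotic expansion, and is valid for all real $s<1$ including the integer values for which the quoted asymptotic is stated only with caveats. Your argument also delivers the marginally stronger conclusion that the $m$th moment is already infinite at the endpoint $s=s_m$, since $m(s-1)=-1$ still makes $\int (1-p)^{m(s-1)}\,dp$ diverge. The paper's route is shorter once the asymptotic is taken as known; yours is more robust and self-contained.
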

\begin{proof}
Chebyshev's inequality \cite[Thm~4.40]{karr92:_probab} states that for a
non-negative random variable $X$
\[ P\big(  X \geq a \big) \leq \frac{ E\big[ \varphi( X ) \big] }{\varphi(a)},
\]
for $a \geq 0$ and $\varphi$ a positive, monotonically increasing
function on $\R^+$. Taking $\varphi(x) = x^n$ we get 
\[ P\big(  Y_s \geq a \big) \leq \frac{ E\big[ Y_s^m \big]}{a^m}.
\]
Now take $a = Q_s(1-p)$ and note that $p = P\big(  Y_s \geq
Q_s(1-p)\big)$ to get
\[ Q_s(1-p)^m p  \leq E\big[ Y_s^m \big]. \]
Now the limit of $\Li_s(z)$ near $z=1$ is (for non-integer $s$) given
by \cite{wood92:_comput_poly,lee97:_polyl_rieman,crandall12:_unified} to be
\[ \Li_s(z) \rightarrow \Gamma(1-s) \big( -\ln z \big)^{s-1}.
\]
If we take the first order term in the Taylor series of $\ln z$ we get
\[ \Li_s(1-p) \stackrel{p \rightarrow 0}{\sim} p^{s-1}.
\]
Therefore
\[ Q_s(1-p)^m p  \stackrel{p \rightarrow 0}{\sim} p^{m(s-1) + 1}, \]
and hence for $m=1$ if $s < 0$, the bound pushes the expected value to
$\infty$ as $p$ goes to 0. In general this pushes the $m$th moment to be
infinite when $m(s-1) + 1 < 0$ or for
\[ s < s_m = 1 - 1/m. \]
\end{proof}

\noindent {\bf Remark:} The proof above does not show that the moments are
finite for $s > 1 - 1/m$; however, numerical experiments suggest that
the bound is tight.  The consequence is that we see the asymptote
for the expectation at $s_1 = 0$, the variance at $s_2 = 1/2$, and if
we plot it we see the asymptote for the third moment at $s_3 =
2/3$. The increasing nature of the sequence means that there are
intervals $[1-1/n, 1-1/(n+1))$ where all moments up to $n$ are finite,
  and the $(n+1)$th moment is infinite.

\section{Relationships}

The polylogarithm function has a range of relationships to other
functions most notably the Riemann zeta function \cite[(a)]{lee97:_polyl_rieman}
\[
   \Li_s(1) = \zeta(s), \mbox{ for } \Re(s) > 1.
\] 
There are many other such relationships known, but few are relevant as
they often involve complex parameters or arguments. 

There are several forms of analytic continuation for the polylogarithm
function.  For instance, when $\Re(s)>0$ we can define it using the
integral 
\begin{equation}
  \label{eq:continuation}
  \Li_s(z) =  \frac{1}{z} \int_{0}^{\infty} \frac{t^{s-1}}{e^t/z - 1} dt,
\end{equation}
except for a pole at $z=1$ for $\Re(s) < 2$, which relates the
polylogarithm to the Bose-Einstein distribution, and a similar
relation connects it to the Fermi-Dirac and Maxwell–Boltzmann
distributions. 

More directly useful is that fact that there are a several parameter
values for which the polylogarithm function simplifies dramatically,
\ie  \cite[(3) and (4)]{lee97:_polyl_rieman}
\begin{eqnarray*}
  \label{eq:li_special_case1}
  \Li_1(z)  & = & -\ln(1-z), \\
  \label{eq:li_special_case0}
  \Li_0(z)  & = & z/(1-z), \\
  \label{eq:li_special_case-1}
  \Li_{-1}(z)  & = & z/(1-z)^2,   \\
  \label{eq:li_special_case-2}
  \Li_{-2}(z)  & = & z (1+z)/(1-z)^3,  \\
  \label{eq:li_special_case-3}
  \Li_{-3}(z)  & = & z (1+ +z^2)/(1-z)^4,
\end{eqnarray*}
and so on. In particular the case $s = 1$ leads to the exponential
distribution and $s = 0$ is interesting because $\Li_{0}(z)$ is the
quantile function of the inverse beta distribution (or beta prime
distribution). The PDF of the inverse beta is
\[ iB( \alpha, \beta ) =
    \frac{ x^{\alpha-1} (1 + x)^{-\alpha - \beta}}{ B(\alpha, \beta) }, 
    \]
where $B(\alpha, \beta)$ is the beta function. When $\alpha = 1$ and
$\beta = 1$ we get
\[ iB( 1, 1 ) = \frac{1}{(1 + x)^2},
\]
which has CDF $x / (1+x)$, and inverse CDF $\Li_{0}(z)$. Note that
the mean of the inverse beta is only finite for $\beta > 1$, and that
its mode is at $(\alpha-1)/(\beta + 1) = 0$ as we expect. Also
the inverse beta with these parameters is a power-law type of
heavy-tailed distribution. 


There are a number of useful limits as well \cite{wood92:_comput_poly,lee97:_polyl_rieman,crandall12:_unified}. For instance,
\begin{eqnarray*}
  \lim_{s \rightarrow \infty} \Li_s(z) & = & z,
\end{eqnarray*}
which implies that for large $s$ the distribution tends towards the
uniform distribution. We see this clearly in the above results for
values of $s$ as small as 10.

The limit for negative $s$ is also useful: 
\begin{eqnarray*}
  \lim_{s \rightarrow -\infty} \Li_s(z) & = & \Gamma(1-s) (- \ln(z) )^{s-1}, 
\end{eqnarray*}
which leads to a closed form to calculate the CDF for large negative
$s$. It is also closely linked to the quantile function for the
generalized extreme value distribution for $\xi > 0$
\[ Q_{GEV}(p ; \mu, \sigma, \xi)
    = \mu + \frac{\sigma}{\xi} \left[ \big( -\ln(p) \big)^{-\xi}  - 1 \right], 
\]    
if we identify
\begin{eqnarray*}
  \xi    & = & 1 - s, \\
  \sigma & = & \xi \Gamma(1-s) = \Gamma(2-s), \\
  \mu    & = &  \frac{\sigma}{\xi} = \Gamma(1-s).
\end{eqnarray*}


Another advantage of defining random variates in terms of their
quantile function is that it is then tautologically trivial to
calculate the quantiles. Notably in this case, the median, \ie
$Q(0.5)$, is easy because many values of $\Li_s(0.5)$ are
known, \eg
\begin{eqnarray*}
  \Li_1(0.5) & = & \ln 2, \\
  \Li_2(0.5) & = & \frac{1}{12} \pi^2 - \frac{1}{2} \left( \ln 2 \right)^2.
\end{eqnarray*}
In general $\Li_n(0.5)$ takes values from the multiple zeta function. 

Finally, note that there are a small selection of other random
variates that have the property that their support is both finite or
infinite (apart from the the Tukey lambda distribution mentioned
earlier and its generalizations \cite{10.1145/360827.360840,Karvanen_2008}), \eg
\begin{itemize}
\item the generalized Pareto distribution; and 
  
\item the Wakeby distribution, which can also be defined in terms of
  quantiles, and whose separation into different classes has been used
  to explain phenomena in
  hydrology~\cite{https://doi.org/10.1029/WR015i005p01049}. 
    
\end{itemize}
However, neither of these have the property that they interpolate between
such a range of standard distributions.

\section{Conclusion}

This paper has presented a new random variate that has properties
similar to the Tukey-$\lambda$ distribution except that it is
non-negative.

The distribution is defined in terms of its quantile function, which
is taken to be a polylogarithm function, and from which we can derive
many properties such as the PDF and expectation. It is noteworthy that
this distribution interpolates all the way from the uniform
distribution to heavy-tailed distributions with some set of infinite
moments. 

There are no doubt many generalizations possible --- there are
certainly many generalizations of the polylogarithm function. At a
deeper level this distribution highlights some advantages of defining
a random variate by its quantile.

Finally, the natural name for this distribution might be thought to be
the polylogarithm distribution, however, that name is unfortunately
already taken \cite{doi:10.1080/03610919208813032}, as is the zeta
distribution, and so as yet it remains nameless.

\section*{Acknowledgments}

We would like to thank the Australian Research Council for funding
through the Centre of Excellence for Mathematical \& Statistical
Frontiers (ACEMS), and grant DP110103505. Additional thanks to Giang
Nguyen for helping review this paper.

  
{\small
\bibliographystyle{siamplain}
\bibliography{polylogarithm,books,computation,tukey}
}\par\leavevmode

\end{document}